\numberwithin{equation}{section}
\newtheorem{theorem}{Theorem}[section]
\newtheorem{proposition}[theorem]{Proposition}
\newtheorem{corollary}[theorem]{Corollary}
\theoremstyle{definition}
\theoremstyle{remark}
\newtheorem{remark}[theorem]{Remark}
\begin{document}
\title[the Toeplitz algebra of a higher-rank graph]{Realising the Toeplitz
algebra of a higher-rank graph as a Cuntz-Krieger algebra}
\author{Yosafat E. P. Pangalela}
\address{Department of Mathematics and Statistics, University of Otago, PO
Box 56, Dunedin 9054, New Zealand}
\email{yosafat.pangalela@maths.otago.ac.nz}
\thanks{This research is part of the author's Ph.D. thesis, supervised by
Professor Iain Raeburn and Dr. Lisa Orloff Clark.}
\date{April 29, 2015}

\begin{abstract}
For a row-finite higher-rank graph $\Lambda $, we construct a higher-rank
graph $T\Lambda $ such that the Toeplitz algebra of $\Lambda $ is isomorphic
to the Cuntz-Krieger algebra of $T\Lambda $. We then prove that the
higher-rank graph $T\Lambda $ is always aperiodic and use this fact to give
another proof of a uniqueness theorem for the Toeplitz algebras of
higher-rank graphs.
\end{abstract}

\maketitle

\section{Introduction}

Higher-rank graphs and their Cuntz-Krieger algebras were introduced by
Kumjian and Pask in \cite{KP00} as a generalisation of the Cuntz-Krieger
algebras of directed graphs. Kumjian and Pask proved an analogue of the
Cuntz-Krieger uniqueness theorem for a family of \emph{aperiodic }%
higher-rank graphs \cite[Theorem 4.6]{KP00}. Aperiodicity is a
generalisation of Condition (L) for directed graphs and comes in several
forms for different kinds of higher-rank graphs (see \cite{FMY05,KP00, LS10,
RSY03, RSY04,RS07,RS09,S12}).

The Toeplitz algebra of a directed graph is an extension of the
Cuntz-Krieger algebra in which the Cuntz-Krieger equations at vertices are
replaced by inequalities. An analogous family of Toeplitz algebras for
higher-rank graph was introduced and studied by Raeburn and Sims \cite{RS05}%
. They proved a uniqueness theorem for Toeplitz algebras \cite[Theorem 8.1]%
{RS05}, generalising a previous theorem for directed graphs \cite[Theorem 4.1%
]{FR99}.

For a directed graph $E$, the Toeplitz algebra of $E$ is canonically
isomorphic to the Cuntz-Krieger algebra of a graph $TE$ (see \cite[Theorem
3.7]{MT04} and \cite[Lemma 3.5]{S10}). Here we provide an analogous
construction for a row-finite higher-rank graph $\Lambda $. We build a
higher-rank graph $T\Lambda $, and show that the Toeplitz algebra of $%
\Lambda $ is canonically isomorphic to the Cuntz-Krieger algebra of $%
T\Lambda $ (Theorem \ref{isomorphism}). Our proof relies on the uniqueness
theorem of \cite{RS05}. However, it is interesting to observe that the
higher-rank graph $T\Lambda $ is always aperiodic. Hence our isomorphism
shows that the uniqueness theorem of \cite{RS05} is a consequence of the
general Cuntz-Krieger uniqueness theorem of \cite{RSY04} (see Remark \ref%
{isomorphism-other-direction}).

\section{Higher-rank graphs}

\label{k-graph}

Let $k$ be a positive integer. We regard $\mathbb{N}^{k}$ as an additive
semigroup with identity $0$. For $m,n\in \mathbb{N}^{k}$, we write $m\vee n$
for \ their coordinate-wise maximum.

A \emph{higher-rank graph} or $k$\emph{-graph} is a pair $\left( \Lambda
,d\right) $ consisting of a countable small category $\Lambda $ together
with a functor $d:\Lambda \rightarrow \mathbb{N}^{k}$ satisfying the \emph{%
factorisation property}: for every $\lambda \in \Lambda $ and $m,n\in 
\mathbb{N}^{k}$ with $d\left( \lambda \right) =m+n$, there are unique
elements $\mu ,\nu \in \Lambda $ such that $\lambda =\mu \upsilon $ and $%
d\left( \mu \right) =m$, $d\left( \nu \right) =n$. We then write $\lambda
\left( 0,m\right) $ for $\mu $ and $\lambda \left( m,m+n\right) $ for $\nu $%
. We regard elements of $\Lambda ^{0}$ as \emph{vertices }and elements of $%
\Lambda $ as \emph{paths}. For detailed explanation and examples, see \cite[%
Chapter 10]{CBMS}.

For $v\in \Lambda ^{0}$ and $E\subseteq \Lambda $, we define $vE:=\left\{
\lambda \in E:r\left( \lambda \right) =v\right\} $ and $m\in \mathbb{N}^{k}$%
, we write $\Lambda ^{m}:=\left\{ \lambda \in \Lambda :d\left( \lambda
\right) =m\right\} $.We use term \emph{edge} to denote a path $e\in \Lambda
^{e_{i}}$ where $1\leq i\leq k$, and write 
\begin{equation*}
\Lambda ^{1}:=\bigcup_{1\leq i\leq k}\Lambda ^{e_{i}}
\end{equation*}%
for the set of all edges. We say that $\Lambda $ is \emph{row-finite} if for
every $v\in \Lambda ^{0}$, the set $v\Lambda ^{e_{i}}$ is finite for $1\leq
i\leq k$. Finally, we say $v\in \Lambda ^{0}$ is a \emph{source} if there
exists $m\in \mathbb{N}^{k}$ such that $v\Lambda ^{m}=\emptyset $.

For a row-finite $k$-graph $\Lambda $, we shall construct a $k$-graph $%
T\Lambda $ which is row-finite and always has sources. Our $k$-graph $%
T\Lambda $ is typically not \emph{locally convex }in the sense of \cite[%
Definition 3.9]{RSY03} (see Remark \ref{locally-convex}), so the approriate
definition of Cuntz-Krieger $\Lambda $-family is the one in \cite{RSY04}.
For detailed discussion about row-finite $k$-graphs and their
generalisations, see \cite[Section 2]{W11}.

From now on, we focus on a row-finite $k$-graph $\Lambda $. For $\lambda
,\mu \in \Lambda $, we say that $\tau $ is a \emph{minimal common extension }%
of $\lambda $ and $\mu $ if 
\begin{equation*}
d\left( \tau \right) =d\left( \lambda \right) \vee d\left( \mu \right) \text{%
, }\tau \left( 0,d\left( \lambda \right) \right) =\lambda \text{ and }\tau
\left( 0,d\left( \mu \right) \right) =\mu \text{.}
\end{equation*}
Let $\func{MCE}\left( \lambda ,\mu \right) $ denote the collection of all
minimal common extensions of $\lambda $ and $\mu $. Then we write%
\begin{equation*}
\Lambda ^{\min }\left( \lambda ,\mu \right) :=\left\{ \left( \lambda
^{\prime },\mu ^{\prime }\right) \in \Lambda \times \Lambda :\lambda \lambda
^{\prime }=\mu \mu ^{\prime }\in \func{MCE}\left( \lambda ,\mu \right)
\right\} \text{.}
\end{equation*}%
A set $E\subseteq v\Lambda ^{1}$ is \emph{exhaustive} if for all $\lambda
\in v\Lambda $, there exists $e\in E$ such that $\Lambda ^{\min }\left(
\lambda ,e\right) \neq \emptyset $.

A \emph{Toeplitz-Cuntz-Krieger }$\Lambda $\emph{-family} is a collection $%
\left\{ t_{\lambda }:\lambda \in \Lambda \right\} $ of partial isometries in
a $C^{\ast }$-algebra $B$ satisfying:

\begin{enumerate}
\item[(TCK1)] $\left\{ t_{v}:v\in \Lambda ^{0}\right\} $ is a collection of
mutually orthogonal projections;

\item[(TCK2)] $t_{\lambda }t_{\mu }=t_{\lambda \mu }$ whenever $s\left(
\lambda \right) =r\left( \mu \right) $; and

\item[(TCK3)] $t_{\lambda }^{\ast }t_{\mu }=\sum_{(\lambda ^{\prime },\mu
^{\prime })\in \Lambda ^{\min }\left( \lambda ,\mu \right) }t_{\lambda
^{\prime }}t_{\mu ^{\prime }}^{\ast }$ for all $\lambda ,\mu \in \Lambda $.
\end{enumerate}

\begin{remark}
In \cite[Lemma 9.2]{RS05}, Raeburn and Sims required also that
\textquotedblleft for all $m\in \mathbb{N}^{k}\backslash \left\{ 0\right\} $%
, $v\in \Lambda ^{0}$, and every set $E\subseteq v\Lambda ^{m}$, $t_{v}\geq
\sum_{\lambda \in E}t_{\lambda }t_{\lambda }^{\ast }$\textquotedblright .
However, by \cite[Lemma 2.7 (iii)]{RSY04}, this follows from (TCK1-3), and
hence our definition is basically same as that of \cite{RS05}.
\end{remark}

Meanwhile, based on \cite[Proposition C.3]{RSY04}, a \emph{Cuntz-Krieger }$%
\Lambda $\emph{-family} is a Toeplitz-Cuntz-Krieger $\Lambda $-family $%
\left\{ t_{\lambda }:\lambda \in \Lambda \right\} $ which satisfies

\begin{enumerate}
\item[(CK)] $\prod_{e\in E}\left( t_{r\left( E\right) }-t_{e}t_{e}^{\ast
}\right) =0$ for all $v\in \Lambda ^{0}$ and exhaustive $E\subseteq v\Lambda
^{1}$.
\end{enumerate}

Raeburn and Sims proved in \cite[Section 4]{RS05} that there is a $C^{\ast }$%
-algebra $TC^{\ast }\left( \Lambda \right) $ generated by a universal
Toeplitz-Cuntz-Krieger $\Lambda $-family $\left\{ t_{\lambda }:\lambda \in
\Lambda \right\} $. If $\left\{ T_{\lambda }:\lambda \in \Lambda \right\} $
is a Toeplitz-Cuntz-Krieger $\Lambda $-family in $C^{\ast }$-algebra $B$, we
write $\phi _{T}$ for the homomorphism of $TC^{\ast }\left( \Lambda \right) $
into $B$ such that $\phi _{T}\left( t_{\lambda }\right) =T_{\lambda }$ for $%
\lambda \in \Lambda $. The quotient of $TC^{\ast }\left( \Lambda \right) $
by the ideal generated by%
\begin{equation*}
\{\prod_{e\in E}\left( t_{r\left( E\right) }-t_{e}t_{e}^{\ast }\right)
=0:v\in \Lambda ^{0},E\subseteq v\Lambda ^{1}\text{ is exhaustive}\}
\end{equation*}%
is generated by a universal family of Cuntz-Krieger $\Lambda $-family $%
\left\{ s_{\lambda }:\lambda \in \Lambda \right\} $, and hence we can
identify it with the $C^{\ast }$-algebra $C^{\ast }\left( \Lambda \right) $.
For a Cuntz-Krieger $\Lambda $-family $\left\{ S_{\lambda }:\lambda \in
\Lambda \right\} $ in $C^{\ast }$-algebra $B$, we write $\pi _{S}$ for the
homorphism of $C^{\ast }\left( \Lambda \right) $ into $B$ such that $\pi
_{S}\left( s_{\lambda }\right) =S_{\lambda }$ for $\lambda \in \Lambda $.
Furthermore, we have $s_{v}\neq 0$ for $v\in \Lambda ^{0}$ \cite[Proposition
2.12]{RSY04}.

As for directed graphs, we have uniqueness theorems for the Toeplitz algebra 
\cite[Theorem 8.1]{RS05} and the Cuntz-Krieger algebra \cite[Theorem 4.5]%
{RSY04}. The former does not need any hypothesis on the $k$-graph as stated
in the following theorem.

\begin{theorem}
\label{uniqueness-theorem-of-toeplitz}Let $\Lambda $ be a row-finite $k$%
-graph. Let $\left\{ T_{\lambda }:\lambda \in \Lambda \right\} $ be a
Toeplitz-Cuntz-Krieger $\Lambda $-family in a $C^{\ast }$-algebra $B$.
Suppose that for every $v\in \Lambda ^{0}$,%
\begin{equation}
\prod_{e\in v\Lambda ^{1}}\left( T_{v}-T_{e}T_{e}^{\ast }\right) \neq 0 
\tag{*}
\end{equation}%
(where this includes $T_{v}\neq 0$ if $v\Lambda ^{1}=\emptyset $). Suppose
that $\phi _{T}:TC^{\ast }\left( \Lambda \right) \rightarrow B$ is the
homomorphism such that $\phi _{T}\left( t_{\lambda }\right) =T_{\lambda }$
for $\lambda \in \Lambda $. Then $\phi _{T}:TC^{\ast }\left( \Lambda \right)
\rightarrow B$ is injective.
\end{theorem}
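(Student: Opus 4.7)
The plan is to adapt the gauge-invariant uniqueness strategy to the setting where the target need not carry a gauge action. The universal $TC^*(\Lambda)$ admits a gauge action $\gamma: \TT^k \to \operatorname{Aut}(TC^*(\Lambda))$ with $\gamma_z(t_\lambda) = z^{d(\lambda)} t_\lambda$, well-defined because the identity $d(\lambda) + d(\lambda') = d(\mu) + d(\mu')$ for $(\lambda',\mu') \in \Lambda^{\min}(\lambda,\mu)$ makes (TCK3) equivariant; averaging against Haar measure produces a faithful conditional expectation $\Phi$ onto the fixed-point subalgebra $TC^*(\Lambda)^\gamma$. To compensate for the lack of a gauge action on $B$, I would use the tensoring trick: define $\tilde\phi_T : TC^*(\Lambda) \to B \otimes C(\TT^k)$ by $t_\lambda \mapsto T_\lambda \otimes z^{d(\lambda)}$, which is equivariant for the rotation action of $\TT^k$ on $C(\TT^k)$, recovers $\phi_T$ under evaluation at $z=1$, and inherits hypothesis $(*)$ verbatim since $\prod_{e \in v\Lambda^1}(\tilde T_v - \tilde T_e \tilde T_e^*) = \prod_{e \in v\Lambda^1}(T_v - T_e T_e^*) \otimes 1$.

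Combining $\tilde\phi_T \circ \Phi = \bar\Phi \circ \tilde\phi_T$ (where $\bar\Phi$ is the analogous averaging on $B \otimes C(\TT^k)$) with faithfulness of $\Phi$, the standard conditional-expectation argument reduces injectivity of $\tilde\phi_T$ to injectivity on the core $TC^*(\Lambda)^\gamma = \overline{\operatorname{span}}\{t_\mu t_\nu^* : d(\mu) = d(\nu)\}$. To analyse the core I would introduce the ``gap'' projections $Q_{v,F} := \prod_{e \in F}(t_v - t_e t_e^*)$ for finite $F \subseteq v\Lambda^1$: iterating (TCK3) and exploiting orthogonality of distinct minimal common extensions exhibits $TC^*(\Lambda)^\gamma$ as the closure of an increasing union of finite direct sums of matrix algebras with matrix units $t_\mu Q_{s(\mu), F} t_\nu^*$, so injectivity on the core reduces to the non-vanishing of $\phi_T(Q_{v, v\Lambda^1})$, which is exactly hypothesis $(*)$.

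The main obstacle is two-fold: first, the combinatorial bookkeeping for the core, since the range projections $t_e t_e^*$ for $e \in v\Lambda^{e_i}$ need not sum to $t_v$, so $TC^*(\Lambda)^\gamma$ contains extra ``defect'' summands indexed by proper subsets $F \subsetneq v\Lambda^1$ absent in the Cuntz-Krieger case; and second, the passage from injectivity of $\tilde\phi_T$ back to $\phi_T$, which is delicate because $\phi_T$ factors through a quotient of $\tilde\phi_T$ via evaluation at $z=1$ and thus requires a separate argument exploiting the $\TT^k$-grading on the image of $\tilde\phi_T$ to conclude that the two maps share the same kernel.
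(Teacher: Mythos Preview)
The paper does not give its own proof of this theorem: it is quoted from \cite{RS05}, where the argument passes through product systems of graphs (the remarks following the statement explain that Lemmas~9.2 and~9.3 of \cite{RS05} reduce the row-finite $k$-graph statement to \cite[Theorem~8.1]{RS05}). So your proposal is necessarily a different route, and the question is whether it closes.

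The genuine gap is your second obstacle, and your suggested resolution does not work. From $\tilde\phi_T(a)(z)=\phi_T(\gamma_z(a))$ one gets only
\[
\ker\tilde\phi_T \;=\; \bigcap_{z\in\TT^k}\gamma_z\bigl(\ker\phi_T\bigr),
\]
which is the \emph{largest gauge-invariant ideal contained in} $\ker\phi_T$. Injectivity of $\tilde\phi_T$ therefore yields only that $\ker\phi_T$ contains no nonzero gauge-invariant ideal; it does not force $\ker\phi_T=0$. The $\TT^k$-grading on $\operatorname{im}\tilde\phi_T$ does not repair this: evaluation at $1$ is injective on each homogeneous component $b\otimes z^n$, but it is not injective on their sum (e.g.\ $b\otimes z - b\otimes z^2$ evaluates to $0$), and you have no mechanism in $B$ to separate the graded pieces. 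Closing this gap amounts to proving that every nonzero ideal of $TC^*(\Lambda)$ contains a nonzero gauge-invariant ideal, which is essentially the theorem itself.

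The way to rescue a direct proof is to \emph{drop the tensoring trick} and use the gap projections to build, inside $B$, a substitute for the missing conditional expectation. The key point is that $Q_v:=\prod_{e\in v\Lambda^1}(t_v-t_e t_e^*)$ satisfies $Q_v t_e=0$ for every edge $e\in v\Lambda^1$, so for $\mu$ long enough the projection $t_\mu Q_{s(\mu)}t_\mu^*$ annihilates every off-diagonal term $t_\sigma t_\tau^*$ with $d(\sigma)\neq d(\tau)$ in a given finite sum. One then compresses $a^*a$ by such a projection to approximate $\Phi(a^*a)$ \emph{inside} $TC^*(\Lambda)$, and hypothesis~$(*)$ guarantees the image of that projection in $B$ is nonzero. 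This is exactly the mechanism behind the paper's Remark~\ref{isomorphism-other-direction}: the vertices $\beta(v)$ in $T\Lambda$ encode the gap projections, and the aperiodicity of $T\Lambda$ is the combinatorial shadow of the compression argument.
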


\begin{remark}
Every $k$-graph $\Lambda $ gives a product system of graphs over $\mathbb{N}%
^{k}$ and a Toeplitz-Cuntz-Krieger $\Lambda $-family gives a Toeplitz $%
\Lambda $-family of the product system \cite[Lemma 9.2]{RS05}. Lemma 9.3 of 
\cite{RS05} shows that, if the Toeplitz-Cuntz-Krieger $\Lambda $-family
satisfies (*), then the Toeplitz $\Lambda $-family satisfies the hypothesis
of \cite[Theorem 8.1]{RS05}.
\end{remark}

\begin{remark}
In the actual hypothesis, we need to verify whether $\prod_{1\leq i\leq
k}(T_{v}-\sum_{e\in G_{i}}T_{e}T_{e}^{\ast })\neq 0$ for every $v\in \Lambda
^{0}$, $1\leq i\leq k$, and finite set $G_{i}\subseteq v\Lambda ^{e_{i}}$.
However, since we only consider row-finite $k$-graphs, then for every $v\in
\Lambda ^{0}$ and $1\leq i\leq k$, the set $v\Lambda ^{e_{i}}$ is finite.
Thus for a row finite $k$-graph, we can simplify Lemma 9.3 of \cite{RS05} as
Theorem \ref{uniqueness-theorem-of-toeplitz}.
\end{remark}

On the other hand, the Cuntz-Krieger uniqueness theorem only holds for $k$%
-graphs\ satisfying a special Condition (B) \cite{RSY04}. Later, Lewin and
Sims in \cite[Proposition 3.6]{LS10} proved that Condition (B) is equivalent
to the following \emph{aperiodicity} condition: for every pair of distinct
paths $\lambda ,\mu \in \Lambda $ with $s\left( \lambda \right) =s\left( \mu
\right) $, there exists $\eta \in s\left( \lambda \right) \Lambda $ such
that $\func{MCE}\left( \lambda \eta ,\mu \eta \right) =\emptyset $ \cite[%
Definition 3.1]{LS10}. (For discussion about the equivalence of various
aperiodicity definitions, see \cite{LS10, RS07, RS09,S12}.) Therefore we get
the following theorem.

\begin{theorem}[{\protect\cite[Theorem 4.5]{RSY04}}]
\label{cuntz-krieger-uniqueness-theorem}Suppose that $\Lambda $ is aperiodic
row-finite $k$-graph and $\left\{ S_{\lambda }:\lambda \in \Lambda \right\} $
\ is a Cuntz-Krieger $\Lambda $-family in a $C^{\ast }$-algebra $B$ such
that $S_{v}\neq 0$ for $v\in \Lambda ^{0}$. Suppose that $\pi _{S}:C^{\ast
}\left( \Lambda \right) \rightarrow B$ is the homomorphism such that $\pi
_{S}\left( s_{\lambda }\right) =S_{\lambda }$ for $\lambda \in \Lambda $ for 
$\lambda \in \Lambda $. Then $\pi _{S}$ is an injective homomorphism.
\end{theorem}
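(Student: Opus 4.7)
The plan is to mimic the standard proof of a Cuntz--Krieger-type uniqueness theorem: show $\pi_S$ is isometric on a suitable ``core'' subalgebra and then use a conditional-expectation argument, with aperiodicity providing the crucial averaging step.

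First I would introduce the core subalgebras. For each $n\in\NN^k$, let $\mathcal{F}_n:=\overline{\operatorname{span}}\{s_\mu s_\nu^* : d(\mu)=d(\nu)=n\}$, and let $\mathcal{F}:=\overline{\bigcup_n \mathcal{F}_n}$, where the union is indexed using the directed set structure on $\NN^k$ (with $s_\mu s_\nu^*=\sum_{\lambda\in s(\mu)\Lambda^{m-n}} s_{\mu\lambda}s_{\nu\lambda}^*$ allowing passage from level $n$ to a larger level $m$, using (CK) to handle the paths at which one cannot extend). Using (TCK3) one checks that each $\mathcal{F}_n$ is a $c_0$-direct sum of matrix algebras indexed by the vertices reached at level $n$, with matrix units $\{s_\mu s_\nu^* : s(\mu)=s(\nu)=v,\ d(\mu)=d(\nu)=n\}$. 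Because $S_v\neq 0$ for every $v\in \Lambda^0$, the map $\pi_S$ is nonzero on each matrix block, hence isometric on $\mathcal{F}_n$, and passing to the limit $\pi_S$ is isometric on $\mathcal{F}$.

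Second I would produce the faithful conditional expectation. The gauge action $\gamma:\TT^k\to\operatorname{Aut}(C^*(\Lambda))$ defined by $\gamma_z(s_\lambda)=z^{d(\lambda)}s_\lambda$ yields a faithful conditional expectation $\Phi:C^*(\Lambda)\to\mathcal{F}$ via averaging. The heart of the argument is to construct, on $B_0:=C^*\{S_\lambda\}\subseteq B$, a bounded linear map $\Psi:B_0\to\pi_S(\mathcal{F})$ with $\Psi\circ\pi_S=\pi_S\circ\Phi$. Concretely, for a finite sum $a=\sum c_{\mu,\nu}\,s_\mu s_\nu^*$, one wants to define $\Psi(\pi_S(a))=\sum_{d(\mu)=d(\nu)} c_{\mu,\nu}\,S_\mu S_\nu^*$. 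For this definition to be consistent one must show $\Psi$ extends continuously, and this is where aperiodicity enters: given such an $a$, I would use the aperiodicity condition to find, for every off-diagonal pair $(\mu,\nu)$ appearing (i.e.\ with $d(\mu)\neq d(\nu)$), a path $\eta$ with $\operatorname{MCE}(\mu\eta',\nu\eta')=\emptyset$ (after suitable bookkeeping that reduces off-diagonal terms with fixed source). A cut-down computation $S_\eta^* \pi_S(a) S_\eta$ then annihilates all off-diagonal terms and leaves a diagonal sum, which, after reindexing and using the calculation in $\mathcal{F}$, is shown to have norm exactly $\|\Phi(a)\|$. This forces $\|\pi_S(\Phi(a))\|\leq \|\pi_S(a)\|$, giving the continuous extension of $\Psi$.

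Finally I would combine these ingredients. Suppose $\pi_S(a)=0$ for some $a\in C^*(\Lambda)$. Then $\pi_S(a^*a)=0$, so $\Psi(\pi_S(a^*a))=0$; but $\Psi\circ\pi_S=\pi_S\circ\Phi$, so $\pi_S(\Phi(a^*a))=0$. Since $\Phi(a^*a)\in\mathcal{F}$ and $\pi_S$ is isometric on $\mathcal{F}$, this gives $\Phi(a^*a)=0$; and since $\Phi$ is faithful, $a^*a=0$, hence $a=0$. Thus $\pi_S$ is injective. The main obstacle in this plan is the aperiodicity step: executing the cut-down argument requires a delicate combinatorial manipulation of finite sums of the form $\sum c_{\mu,\nu} s_\mu s_\nu^*$, using (TCK3), (CK), and the aperiodicity hypothesis simultaneously to produce a single path $\eta$ that kills \emph{all} off-diagonal contributions at once while preserving the diagonal norm; this is exactly the technical content that \cite[Theorem 4.5]{RSY04} and the equivalent formulation \cite[Proposition 3.6]{LS10} supply.
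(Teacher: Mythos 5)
This theorem is quoted in the paper directly from \cite[Theorem 4.5]{RSY04} (combined with \cite[Proposition 3.6]{LS10} to replace Condition (B) by aperiodicity), and the paper gives no proof of its own; so there is nothing internal to compare your argument against. Your sketch is the standard strategy that the cited source actually uses -- gauge action, faithful conditional expectation $\Phi$ onto the fixed-point algebra, isometry of $\pi_S$ on the core via the $S_v\neq 0$ hypothesis, and an aperiodicity-driven cut-down that kills off-diagonal terms of a finite sum $\sum c_{\mu,\nu}s_\mu s_\nu^*$ while preserving the norm of the diagonal part -- so as an outline it is the right proof. The one place where your sketch is genuinely too optimistic is the structure of the core: for a $k$-graph with sources (and note that $T\Lambda$ in this paper \emph{always} has sources, so this is precisely the case that matters here), the algebras $\mathcal{F}_n=\overline{\operatorname{span}}\{s_\mu s_\nu^*:d(\mu)=d(\nu)=n\}$ do \emph{not} form a directed system under the naive inclusion $s_\mu s_\nu^*=\sum_{\lambda\in s(\mu)\Lambda^{m-n}}s_{\mu\lambda}s_{\nu\lambda}^*$, because $s(\mu)\Lambda^{m-n}$ may be empty or non-exhaustive and (CK) only applies to exhaustive sets; your parenthetical ``using (CK) to handle the paths at which one cannot extend'' hides the real work. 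In \cite{RSY04} this is resolved by re-indexing the core over finite $\vee$-closed sets of paths and analysing the associated projections, which is a substantial technical layer beyond the row-finite, no-sources picture of \cite{KP00}. Since you explicitly defer both this and the combinatorics of the cut-down to the cited references, your proposal is an accurate roadmap rather than a self-contained proof -- which is consistent with how the paper itself treats the result.
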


\section{The $k$-graph $T\Lambda $}

\label{tlambda-and-lambda}

Suppose that $\Lambda $ is a row-finite $k$-graph $\Lambda $. In this
section, we define a $k$-graph $T\Lambda $; later we show that $TC^{\ast
}\left( \Lambda \right) \cong C^{\ast }\left( T\Lambda \right) $ (Theorem %
\ref{isomorphism}). Interestingly, our $k$-graph $T\Lambda $ is always
aperiodic (Proposition \ref{characterisation-of-tlambda}).

\begin{proposition}
\label{tlambda}Let $\Lambda =\left( \Lambda ,d,r,s\right) $ be a row-finite $%
k$-graph. Then define sets $T\Lambda ^{0}$ and $T\Lambda $ as follows:%
\begin{equation*}
T\Lambda ^{0}:=\left\{ \alpha \left( v\right) :v\in \Lambda ^{0}\right\}
\cup \left\{ \beta \left( v\right) :v\Lambda ^{1}\neq \emptyset \right\} 
\text{;}
\end{equation*}%
\begin{equation*}
T\Lambda :=\left\{ \alpha \left( \lambda \right) :\lambda \in \Lambda
\right\} \cup \left\{ \beta \left( \lambda \right) :\lambda \in \Lambda
,s\left( \lambda \right) \Lambda ^{1}\neq \emptyset \right\} \text{.}
\end{equation*}%
Define functions $r,s:T\Lambda \backslash T\Lambda ^{0}\rightarrow T\Lambda
^{0}$ by%
\begin{align*}
r\left( \alpha \left( \lambda \right) \right) =\alpha \left( r\left( \lambda
\right) \right) &\text{, }s\left( \alpha \left( \lambda \right) \right)
=\alpha \left( s\left( \lambda \right) \right) \text{,} \\
r\left( \beta \left( \lambda \right) \right) =\alpha \left( r\left( \lambda
\right) \right) &\text{, }s\left( \beta \left( \lambda \right) \right)
=\beta \left( s\left( \lambda \right) \right)
\end{align*}%
($r,s$ are the identity on $T\Lambda ^{0}$). We also define a partially
defined product $\left( \tau ,\omega \right) \mapsto \tau \omega $ from 
\begin{equation*}
\left\{ \left( \tau ,\omega \right) \in T\Lambda \times T\Lambda :s\left(
\tau \right) =r\left( \omega \right) \right\}
\end{equation*}%
to $T\Lambda $, where%
\begin{equation*}
\left( \alpha \left( \lambda \right) ,\alpha \left( \mu \right) \right)
\mapsto \alpha \left( \lambda \mu \right)
\end{equation*}%
\begin{equation*}
\left( \alpha \left( \lambda \right) ,\beta \left( \mu \right) \right)
\mapsto \beta \left( \lambda \mu \right)
\end{equation*}%
and a function $d:T\Lambda \rightarrow \mathbb{N}^{k}$ where%
\begin{equation*}
d\left( \alpha \left( \lambda \right) \right) =d\left( \beta \left( \lambda
\right) \right) =d\left( \lambda \right) \text{.}
\end{equation*}%
Then $\left( T\Lambda ,d\right) $ is a $k$-graph.
\end{proposition}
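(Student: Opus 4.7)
The plan is to verify the three ingredients of a $k$-graph in turn: that $T\Lambda$ is a countable small category under the declared source, range, and partial product; that $d:T\Lambda\to\mathbb{N}^{k}$ is a functor; and that $(T\Lambda,d)$ satisfies the factorisation property. Countability of $T\Lambda$ is immediate since $\gamma(\lambda)\mapsto(\lambda,\gamma)$ embeds $T\Lambda$ into the countable set $\Lambda\times\{\alpha,\beta\}$.

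For the category structure, the key structural observation is that $r(\alpha(\lambda))=r(\beta(\lambda))=\alpha(r(\lambda))$ is always an $\alpha$-vertex. Hence any composable pair $(\tau,\omega)$ of non-vertex elements satisfies $s(\tau)=r(\omega)=\alpha(\cdot)$, which forces $\tau$ to be of $\alpha$-type; this is why the two listed rules exhaust the non-trivial cases, the remaining compositions being forced by the identity laws $\tau\cdot s(\tau)=\tau$ and $r(\tau)\cdot\tau=\tau$. I would then check well-definedness (noting that $\beta(\lambda)\in T\Lambda$ guarantees $\beta(s(\lambda))\in T\Lambda^{0}$), and verify associativity $(\tau\omega)\sigma=\tau(\omega\sigma)$ by a brief case analysis on the types of the three elements that reduces to associativity in $\Lambda$. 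Additivity of $d$ is then immediate from $d(\alpha(\lambda))=d(\beta(\lambda))=d(\lambda)$ combined with the composition rules.

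For the factorisation property, given $\tau\in T\Lambda$ with $d(\tau)=m+n$, write $\tau=\gamma(\lambda)$, apply the factorisation property of $\Lambda$ to get $\lambda=\lambda_{1}\lambda_{2}$ with $d(\lambda_{1})=m$, $d(\lambda_{2})=n$, and set $\mu:=\alpha(\lambda_{1})$, $\nu:=\gamma(\lambda_{2})$; then $\tau=\mu\nu$ holds by the relevant rule. In the case $\gamma=\beta$ one must verify that $\beta(\lambda_{2})\in T\Lambda$, which follows because $s(\lambda_{2})=s(\lambda)$ and $\beta(\lambda)\in T\Lambda$ forces $s(\lambda)\Lambda^{1}\neq\emptyset$. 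For uniqueness, any factorisation $\tau=\mu'\nu'$ must have $\mu'$ of $\alpha$-type (since first factors always are) and $\nu'$ of the same type as $\tau$ (since under both composition rules the result and the second factor share their type), after which uniqueness of factorisation in $\Lambda$ completes the argument.

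The main subtlety is bookkeeping of the two types $\alpha,\beta$ and the source-condition $s(\lambda)\Lambda^{1}\neq\emptyset$ attached to $\beta$-paths: one must check that this condition is preserved under the composition rules and under factorisation, and that the implicit identity extension of the partial product really does cover every pair with $s(\tau)=r(\omega)$. Beyond these checks, everything is a direct transfer of the corresponding structure on $\Lambda$.
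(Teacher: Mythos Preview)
Your proposal is correct and follows essentially the same approach as the paper: verify countability, check the category axioms by a case analysis on the $\alpha/\beta$ types that reduces to the corresponding facts in $\Lambda$, observe that $d$ is additive, and deduce existence and uniqueness of factorisations from those in $\Lambda$. If anything you are slightly more explicit than the paper about two points it glosses over---namely, why the two listed composition rules exhaust all non-trivial composable pairs (your observation that ranges are always $\alpha$-vertices), and why $\beta(\lambda_{2})\in T\Lambda$ in the factorisation step---but the overall argument is the same.
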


\begin{proof}
First we claim that $T\Lambda $ is a countable category. Note that $T\Lambda 
$ is a countable since $\Lambda $ is countable.

Now we show that for all paths $\eta ,\tau ,\omega $ in $T\Lambda $ where $%
s\left( \eta \right) =r\left( \tau \right) $ and $s\left( \tau \right)
=r\left( \omega \right) $, we have $s\left( \tau \omega \right) =s\left(
\omega \right) $, $r\left( \tau \omega \right) =r\left( \tau \right) $, and $%
\left( \eta \tau \right) \omega =\eta \left( \tau \omega \right) $. If one
of $\tau ,\omega $ is a vertex then we are done. So assume otherwise, and we
have $\eta =\alpha \left( \lambda \right) $, $\tau =\alpha \left( \mu
\right) $, and $\omega $ is either $\alpha \left( \nu \right) $ or $\beta
\left( \nu \right) $ for some paths $\lambda ,\mu ,\nu $ in $\Lambda $. In
both cases, we always have $s\left( \lambda \right) =r\left( \mu \right) $, $%
s\left( \mu \right) =r\left( \nu \right) $, and $\left( \lambda \mu \right)
\nu =\lambda \left( \mu \nu \right) $. If $\omega =\alpha \left( \nu \right) 
$, we have%
\begin{equation*}
s\left( \tau \omega \right) =s\left( \alpha \left( \mu \right) \alpha \left(
\nu \right) \right) =s\left( \alpha \left( \mu \nu \right) \right) =\alpha
\left( s\left( \mu \nu \right) \right) =\alpha \left( s\left( \nu \right)
\right) =s\left( \alpha \left( \nu \right) \right) =s\left( \omega \right) 
\text{,}
\end{equation*}%
\begin{equation*}
r\left( \tau \omega \right) =r\left( \alpha \left( \mu \right) \alpha \left(
\nu \right) \right) =r\left( \alpha \left( \mu \nu \right) \right) =\alpha
\left( r\left( \mu \nu \right) \right) =\alpha \left( r\left( \mu \right)
\right) =r\left( \alpha \left( \mu \right) \right) =r\left( \tau \right) 
\text{, and}
\end{equation*}%
\begin{align*}
\left( \eta \tau \right) \omega & =\big(\alpha \left( \lambda \right) \alpha
\left( \mu \right) \big)\alpha \left( \nu \right) =\alpha \left( \lambda \mu
\right) \alpha \left( \nu \right) =\alpha \left( \left( \lambda \mu \right)
\nu \right) \\
& =\alpha \left( \lambda \left( \mu \nu \right) \right) =\alpha \left(
\lambda \right) \alpha \left( \mu \nu \right) =\alpha \left( \lambda \right) %
\big(\alpha \left( \mu \right) \alpha \left( \nu \right) \big)=\eta \left(
\tau \omega \right) \text{.}
\end{align*}%
On the other hand, if $\omega =\beta \left( \nu \right) $, then%
\begin{equation*}
s\left( \tau \omega \right) =s\left( \alpha \left( \mu \right) \beta \left(
\nu \right) \right) =s\left( \beta \left( \mu \nu \right) \right) =\beta
\left( s\left( \mu \nu \right) \right) =\beta \left( s\left( \nu \right)
\right) =s\left( \beta \left( \nu \right) \right) =s\left( \omega \right) 
\text{,}
\end{equation*}%
\begin{equation*}
r\left( \tau \omega \right) =r\left( \alpha \left( \mu \right) \beta \left(
\nu \right) \right) =r\left( \beta \left( \mu \nu \right) \right) =\alpha
\left( r\left( \mu \nu \right) \right) =\alpha \left( r\left( \mu \right)
\right) =r\left( \alpha \left( \mu \right) \right) =r\left( \tau \right) 
\text{, and}
\end{equation*}%
\begin{align*}
\left( \eta \tau \right) \omega & =\big(\alpha \left( \lambda \right) \alpha
\left( \mu \right) \big)\beta \left( \nu \right) =\alpha \left( \lambda \mu
\right) \beta \left( \nu \right) =\beta \left( \left( \lambda \mu \right)
\nu \right) \\
& =\beta \left( \lambda \left( \mu \nu \right) \right) =\alpha \left(
\lambda \right) \beta \left( \mu \nu \right) =\alpha \left( \lambda \right) %
\big(\alpha \left( \mu \right) \beta \left( \nu \right) \big)=\eta \left(
\tau \omega \right) \text{.}
\end{align*}%
Thus, $T\Lambda $ is a countable category, as claimed.

Now we show that $d$ is a functor. Note that both $T\Lambda $ and $\mathbb{N}%
^{k}$ are categories. First take object $x\in T\Lambda ^{0}$, then $d\left(
x\right) =0$ is an object in category $\mathbb{N}^{k}$. Next take morphisms $%
\tau ,\omega \in T\Lambda $ with $s\left( \tau \right) =r\left( \omega
\right) $. Then by definition of $d$,%
\begin{equation*}
d\left( \tau \omega \right) =d\left( \tau \right) +d\left( \omega \right) 
\text{.}
\end{equation*}%
Hence, $d$ is a functor.

To show that $d$ satisfies the factorisation property, take $\omega \in
T\Lambda $ and $m,n\in \mathbb{N}^{k}$ such that $d\left( \omega \right)
=m+n $. By definition, $\omega $ is either $\alpha \left( \lambda \right) $
or $\beta \left( \lambda \right) $ for some path $\lambda $ in $\Lambda $.
In both cases, there exist paths $\mu ,\nu $ in $\Lambda $ such that $%
\lambda =\mu \nu $, $d\left( \mu \right) =m$, and $d\left( \nu \right) =n$.
Then, we have $d\left( \alpha \left( \mu \right) \right) =m$, $d\left(
\alpha \left( \nu \right) \right) =d\left( \beta \left( \nu \right) \right)
=n$, and $\omega $ is either equal to $\alpha \left( \mu \right) \alpha
\left( \nu \right) $ or $\alpha \left( \mu \right) \beta \left( \nu \right) $%
. Therefore, the existence of factorisation is guaranteed.

Now we show that the factorisation is unique. First suppose $\omega =\alpha
\left( \mu \right) \alpha \left( \nu \right) =\alpha \left( \mu ^{\prime
}\right) \alpha \left( \nu ^{\prime }\right) $ where $d\left( \alpha \left(
\mu \right) \right) =d\left( \alpha \left( \mu ^{\prime }\right) \right) $
and $d\left( \alpha \left( \nu \right) \right) =d\left( \alpha \left( \nu
^{\prime }\right) \right) $. We consider paths $\lambda =\mu \nu $ and $%
\lambda ^{\prime }=\mu ^{\prime }\nu ^{\prime }$. Since $\alpha \left(
\lambda \right) =\omega =\alpha \left( \lambda ^{\prime }\right) $, then $%
\lambda =\lambda ^{\prime }$. This implies $\mu =\mu ^{\prime }$ and $\nu
=\nu ^{\prime }$ based on the uniquness of factorisation in $\Lambda $. Then 
$\alpha \left( \mu \right) =\alpha \left( \mu ^{\prime }\right) $ and $%
\alpha \left( \nu \right) =\alpha \left( \nu ^{\prime }\right) $. For the
case $\omega =\alpha \left( \mu \right) \beta \left( \nu \right) $, we get
the same result by using the same argument. The conclusion follows.
\end{proof}

\begin{remark}
For a directed graph $E$ (that is, for $k=1$), the graph $TE$ was
constructed by Muhly and Tomforde \cite[Definition 3.6]{MT04} (denoted $%
E_{V} $), and by Sims \cite[Section 3]{S10} (denoted $\widetilde{E}$). Our
notation follows that of Sims because we want to distinguish between paths
in $T\Lambda $ (denoted $\alpha \left( \lambda \right) $ and $\beta \left(
\lambda \right) $) and those in $\Lambda $ (denoted $\lambda $).
\end{remark}

\begin{remark}
\label{locally-convex}Every vertex $\beta \left( v\right) $ satisfies $\beta
\left( v\right) T\Lambda ^{1}=\emptyset $. Then if $\Lambda $ has a vertex $%
v $ which receives edges $e,f$ with $d\left( e\right) \neq d\left( f\right) $%
, then there is no edge $g\in \beta \left( s\left( e\right) \right) \Lambda
^{d\left( f\right) }$ (or $g\in \alpha \left( s\left( e\right) \right)
\Lambda ^{d\left( f\right) }$ if $s\left( e\right) \Lambda =\emptyset $),
and hence $\Lambda $ is not locally convex.
\end{remark}

The following lemma tells about properties of the $k$-graph $T\Lambda $.

\begin{proposition}
\label{characterisation-of-tlambda}Let $\Lambda $ be a row-finite $k$-graph
and $T\Lambda $ be the $k$-graph as in Proposition \ref{tlambda}. Then,

\begin{enumerate}
\item[(a)] $T\Lambda $ is row-finite.

\item[(b)] $T\Lambda $ is aperiodic.
\end{enumerate}
\end{proposition}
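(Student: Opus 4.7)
For part~(a), I would observe that every edge in $T\Lambda$ has range of the form $\alpha(v)$, because both $r(\alpha(e)) = \alpha(r(e))$ and $r(\beta(e)) = \alpha(r(e))$. Hence $\beta(v) T\Lambda^{e_i} = \emptyset$ for every $i$, and $\alpha(v) T\Lambda^{e_i}$ is the union of $\{\alpha(e) : e \in v\Lambda^{e_i}\}$ with $\{\beta(e) : e \in v\Lambda^{e_i},\ s(e)\Lambda^1 \neq \emptyset\}$, whose cardinality is at most $2|v\Lambda^{e_i}|$ and therefore finite because $\Lambda$ is row-finite.

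For part~(b), the main tool I would prove first is the following prefix lemma, extracted from the factorisation argument of Proposition~\ref{tlambda}: if $\pi \in T\Lambda$ and $m \leq d(\pi)$ with $m \neq d(\pi)$, then $\pi(0,m)$ is of $\alpha$-type. Indeed, if $\pi(0,m)$ were of the form $\beta(\cdot)$, then $s(\pi(0,m))$ would be a $\beta$-vertex, but $\beta(v) T\Lambda = \{\beta(v)\}$ admits no paths of positive degree, forcing $d(\pi) = m$. Equivalently, a $\beta$-type path can never be a proper prefix of another path in $T\Lambda$.

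I would then verify aperiodicity by case analysis on two distinct paths $\tau, \sigma \in T\Lambda$ with $s(\tau) = s(\sigma)$. If both are $\beta$-type, then $s(\tau)$ is a $\beta$-vertex, so the only choice is $\eta := s(\tau)$, and the prefix lemma together with $\tau \neq \sigma$ forces $\MCE(\tau,\sigma) = \emptyset$. If both are $\alpha$-type, write $\tau = \alpha(\lambda)$, $\sigma = \alpha(\mu)$ and $v := s(\lambda) = s(\mu)$. When $d(\lambda) = d(\mu)$, set $\eta := \alpha(v)$: any common extension has degree $d(\lambda)$ and hence must equal both $\alpha(\lambda)$ and $\alpha(\mu)$, contradicting $\lambda \neq \mu$. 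When $d(\lambda) \neq d(\mu)$, I would split into three sub-cases. If some $e \in v\Lambda^1$ satisfies $s(e)\Lambda^1 \neq \emptyset$, take $\eta := \beta(e)$, so that $\tau\eta$ and $\sigma\eta$ are $\beta$-type of distinct degrees and the prefix lemma yields $\MCE = \emptyset$. If $v\Lambda^1 = \emptyset$, take $\eta := \alpha(v)$; otherwise take $\eta := \alpha(e)$ for an arbitrary $e \in v\Lambda^1$. In these latter two sub-cases, writing $\zeta = v$ or $\zeta = e$ so that $\eta = \alpha(\zeta)$, the hypothesis $s(\zeta)\Lambda^1 = \emptyset$ combined with $d(\lambda) \neq d(\mu)$ forces $\MCE(\lambda\zeta,\mu\zeta) = \emptyset$ in $\Lambda$ (any common extension would have to traverse $s(\zeta)$ with positive degree), and the same condition rules out every $\beta$-type candidate common extension in $T\Lambda$.

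The main obstacle is the third sub-case above, where $v$ has outgoing edges but each of them lands at a vertex with no further outgoing edges, so no $\beta$-type $\eta \in \alpha(v)T\Lambda$ is available. There one must combine the sink hypothesis with the prefix lemma to kill the $\alpha$-type and $\beta$-type candidate minimal common extensions simultaneously, rather than relying on the cleaner direct degree-mismatch argument that applies as soon as a suitable $\beta(e)$ can be chosen.
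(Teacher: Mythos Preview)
Your argument is correct, but it is considerably more elaborate than the paper's. The paper's proof of~(b) never looks at the types or degrees of $\tau$ and $\omega$ individually; it only inspects the common source $s(\tau)$ and chooses $\eta$ so that $s(\eta)$ is a source in $T\Lambda$ (that is, $s(\eta)T\Lambda^{1}=\emptyset$). Concretely: if $s(\tau)=\beta(v)$ take $\eta=\beta(v)$; if $s(\tau)=\alpha(v)$ with $v\Lambda^{1}=\emptyset$ take $\eta=\alpha(v)$; otherwise pick any $e\in v\Lambda^{1}$ and take $\eta=\alpha(e)$ or $\eta=\beta(e)$ according as $s(e)\Lambda^{1}$ is empty or not. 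Once $s(\eta)$ is a source, $\MCE(\tau\eta,\omega\eta)=\emptyset$ follows uniformly, because any minimal common extension would have to extend at least one of $\tau\eta$, $\omega\eta$ nontrivially unless $d(\tau\eta)=d(\omega\eta)$, and in the latter case it would equal both, forcing $\tau=\omega$.

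Your prefix lemma is exactly the statement that $\beta$-vertices are sources in $T\Lambda$, so you are using the same underlying fact; but your separate treatment of the $d(\lambda)=d(\mu)$ case (where you take $\eta=\alpha(v)$ even when $\alpha(v)$ is \emph{not} a source) and your existential search for an $e$ with $s(e)\Lambda^{1}\neq\emptyset$ are unnecessary detours. In particular, the ``obstacle'' you identify in your third sub-case is not an obstacle at all: it is handled by the same one-line source argument, since $\alpha(s(e))$ is then a source in $T\Lambda$. One small omission in your write-up is the mixed case (one of $\tau,\sigma$ of $\alpha$-type and the other of $\beta$-type), which you should note is vacuous because $\alpha$- and $\beta$-vertices are disjoint, so $s(\tau)=s(\sigma)$ forces both paths to be of the same type.
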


\begin{proof}
To show part (a), take $x\in T\Lambda ^{0}$. If $x=\beta \left( v\right) $
for some $v\in \Lambda ^{0}$, then $xT\Lambda ^{1}=\emptyset $ by Remark \ref%
{locally-convex}. Suppose $x=\alpha \left( v\right) $ for some $v\in \Lambda
^{0}$. If $v\Lambda ^{1}=\emptyset $, then $xT\Lambda ^{1}=\emptyset $.
Otherwise, for $1\leq i\leq k$ such that $v\Lambda ^{e_{i}}\neq \emptyset $,
we have 
\begin{equation*}
\left\vert xT\Lambda ^{e_{i}}\right\vert \leq 2\left\vert v\Lambda
^{e_{i}}\right\vert \text{,}
\end{equation*}
which is finite.

For part (b), take $\tau ,\omega \in T\Lambda $ such that $\tau \neq \omega $
and $s\left( \tau \right) =s\left( \omega \right) $. We have to show there
exists $\eta \in s\left( \tau \right) T\Lambda $ such that $\func{MCE}\left(
\tau \eta ,\omega \eta \right) =\emptyset $. If $s\left( \tau \right) =\beta
\left( v\right) $ for some $v\in \Lambda ^{0}$, then choose $\eta =\beta
\left( v\right) $ and $\func{MCE}\left( \tau \eta ,\omega \eta \right)
=\emptyset $. So suppose $s\left( \tau \right) =\alpha \left( v\right) $ for
some $v\in \Lambda ^{0}$. If $v\Lambda ^{1}=\emptyset $, then choose $\eta
=\alpha \left( v\right) $ and $\func{MCE}\left( \tau \eta ,\omega \eta
\right) =\emptyset $. Suppose $v\Lambda ^{1}\neq \emptyset $. Take $e\in
v\Lambda ^{1}$. If $s\left( e\right) \Lambda ^{1}=\emptyset $, then choose $%
\eta =\alpha \left( e\right) $ and $\func{MCE}\left( \tau \eta ,\omega \eta
\right) =\emptyset $. Otherwise, we have $s\left( e\right) \Lambda ^{1}\neq
\emptyset $. Then choose $\eta =\beta \left( e\right) $ and $\func{MCE}%
\left( \tau \eta ,\omega \eta \right) =\emptyset $. Hence, $T\Lambda $ is
aperiodic.
\end{proof}

\section{Realising $TC^{\ast }\left( \Lambda \right) $ as a Cuntz-Krieger
algebra}

\label{tc-and-c}

Let $\Lambda $ be a row-finite $k$-graph and $T\Lambda $ be the $k$-graph as
in Lemma \ref{tlambda}. In this Section, we show that $TC^{\ast }\left(
\Lambda \right) $ is isomorphic to $C^{\ast }\left( T\Lambda \right) $.

\begin{theorem}
\label{isomorphism}Let $\Lambda $ be a row-finite $k$-graph and $T\Lambda $
be the $k$-graph as in Proposition \ref{tlambda}. Let $\left\{ t_{\lambda
}:\lambda \in \Lambda \right\} $ be the universal Toeplitz-Cuntz-Krieger $%
\Lambda $-family and $\left\{ s_{\omega }:\omega \in T\Lambda \right\} $ be
the universal Cuntz-Krieger $\,T\Lambda $-family. For $\lambda \in \Lambda $%
, let%
\begin{equation*}
T_{\lambda }:=%
\begin{cases}
s_{\alpha \left( \lambda \right) }+s_{\beta \left( \lambda \right) } & \text{%
if }s\left( \lambda \right) \Lambda ^{1}\neq \emptyset \\ 
s_{\alpha \left( \lambda \right) } & \text{if }s\left( \lambda \right)
\Lambda ^{1}=\emptyset \text{.}%
\end{cases}%
\end{equation*}%
Then there is an isomorphism $\phi _{T}:TC^{\ast }\left( \Lambda \right)
\rightarrow C^{\ast }(T\Lambda )$ satisfying $\phi _{T}\left( t_{\lambda
}\right) =T_{\lambda }$ for every $\lambda \in \Lambda $.

Furthermore, $s_{\alpha \left( \lambda \right) }=\phi _{T}\left( t_{\lambda
}\right) $ if $s\left( \lambda \right) \Lambda ^{1}=\emptyset $. Meanwhile,
if $s\left( \lambda \right) \Lambda ^{1}\neq \emptyset $, we have $s_{\alpha
\left( \lambda \right) }=\phi _{T}\left( t_{\lambda }-t_{\lambda
}\prod_{e\in v\Lambda ^{1}}(t_{v}-t_{e}t_{e}^{\ast })\right) $ and $s_{\beta
\left( \lambda \right) }=\phi _{T}\left( t_{\lambda }\prod_{e\in v\Lambda
^{1}}(t_{v}-t_{e}t_{e}^{\ast })\right) $.
\end{theorem}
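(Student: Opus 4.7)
My plan is to execute three stages. First, verify that the operators $T_\lambda \in C^*(T\Lambda)$ form a Toeplitz--Cuntz--Krieger $\Lambda$-family; this gives a homomorphism $\phi_T : TC^*(\Lambda) \to C^*(T\Lambda)$ by universality. Second, apply the Toeplitz uniqueness theorem (Theorem \ref{uniqueness-theorem-of-toeplitz}) to show that $\phi_T$ is injective. Third, establish the explicit formulas in the ``furthermore'' part of the theorem, which exhibit every generator of $C^*(T\Lambda)$ as an element of the image of $\phi_T$, hence $\phi_T$ is surjective.

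For the first stage I will adopt the uniform convention $s_{\beta(\lambda)} := 0$ when $s(\lambda)\Lambda^1 = \emptyset$, so that $T_\lambda = s_{\alpha(\lambda)} + s_{\beta(\lambda)}$ always. Then (TCK1) and (TCK2) reduce to direct computations in which cross terms such as $s_{\beta(\lambda)} s_{\alpha(\mu)}$ vanish because the vertices $\alpha(v)$ and $\beta(v)$ are orthogonal in $T\Lambda^0$. Verifying (TCK3) requires computing the four minimal common extension sets $(T\Lambda)^{\min}(x(\lambda), y(\mu))$ for $x, y \in \{\alpha, \beta\}$; the key simplification is $\beta(v) T\Lambda^1 = \emptyset$, which forces any path with range $\beta(v)$ to be the vertex itself and so restricts the combinatorics significantly. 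After collecting the four contributions, the resulting sum must be matched against $\sum_{(\lambda',\mu') \in \Lambda^{\min}(\lambda,\mu)} T_{\lambda'} T_{\mu'}^*$; again, the cross terms $s_{\alpha(\nu)} s_{\beta(\nu')}^*$ and $s_{\beta(\nu)} s_{\alpha(\nu')}^*$ arising on the right vanish by the same vertex orthogonality.

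The second stage hinges on the identity
\begin{equation*}
\prod_{e \in v\Lambda^1}\bigl(T_v - T_e T_e^*\bigr) = s_{\beta(v)} \quad \text{whenever } v\Lambda^1 \neq \emptyset.
\end{equation*}
Because both $\alpha(e)$ and $\beta(e)$ have range $\alpha(v)$, the projection $T_e T_e^* = s_{\alpha(e)}s_{\alpha(e)}^* + s_{\beta(e)}s_{\beta(e)}^*$ lies under $s_{\alpha(v)}$, so $T_v - T_e T_e^* = (s_{\alpha(v)} - T_e T_e^*) + s_{\beta(v)}$ is a sum of orthogonal projections. Since $s_{\beta(v)}$ annihilates every subprojection of $s_{\alpha(v)}$, the product distributes to $\prod_e(s_{\alpha(v)} - T_e T_e^*) + s_{\beta(v)}$, and the first summand vanishes by applying the Cuntz--Krieger relation in $T\Lambda$ to the exhaustive set $\alpha(v) T\Lambda^1$ (after rewriting $(s_{\alpha(v)} - s_{\alpha(e)}s_{\alpha(e)}^*)(s_{\alpha(v)} - s_{\beta(e)}s_{\beta(e)}^*) = s_{\alpha(v)} - T_e T_e^*$). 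Since vertex projections in $C^*(T\Lambda)$ are nonzero, the hypothesis of Theorem \ref{uniqueness-theorem-of-toeplitz} holds (the case $v\Lambda^1 = \emptyset$ giving the empty product $T_v = s_{\alpha(v)} \neq 0$), and $\phi_T$ is injective.

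The third stage then follows almost immediately from the same identity: multiplying $T_\lambda$ on the right by $s_{\beta(s(\lambda))} = \prod_{e \in s(\lambda)\Lambda^1}(T_{s(\lambda)} - T_e T_e^*)$ annihilates $s_{\alpha(\lambda)}$ (orthogonal vertices at the source) while fixing $s_{\beta(\lambda)}$, which yields the stated formula for $s_{\beta(\lambda)}$ and hence $s_{\alpha(\lambda)} = T_\lambda - s_{\beta(\lambda)}$. So every generator of $C^*(T\Lambda)$ lies in the image of $\phi_T$. The main obstacle I expect is the (TCK3) verification in the first stage: the interplay between the two path types $\alpha, \beta$ and the side condition $s(\lambda)\Lambda^1 \neq \emptyset$ creates several subcases of $(T\Lambda)^{\min}$ that must all be carefully reconciled with $\Lambda^{\min}(\lambda,\mu)$, but once the $\alpha/\beta$ vertex orthogonality is consistently exploited the bookkeeping collapses.
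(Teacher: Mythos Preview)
Your proposal is correct and follows essentially the same three-stage strategy as the paper: verify (TCK1)--(TCK3) for $\{T_\lambda\}$ (with the same convention $s_{\beta(\lambda)}:=0$ when $s(\lambda)\Lambda^1=\emptyset$), invoke Theorem~\ref{uniqueness-theorem-of-toeplitz} for injectivity, and then produce each $s_{\alpha(\lambda)},s_{\beta(\lambda)}$ in the image of $\phi_T$ via the identity $\prod_{e\in v\Lambda^1}(T_v-T_eT_e^*)=s_{\beta(v)}$. The only organisational difference is that you establish this equality once (via the (CK) relation for the exhaustive set $\alpha(v)T\Lambda^1$) and reuse it for both injectivity and surjectivity, whereas the paper first proves the inequality $\prod_{e\in v\Lambda^1}(T_v-T_eT_e^*)\geq s_{\beta(v)}$ for injectivity and only later upgrades it to equality in the surjectivity argument.
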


\begin{proof}[Proof that $\left\{ T_{\protect\lambda }:\protect\lambda \in
\Lambda \right\} $ is a Toeplitz-Cuntz-Krieger $\Lambda $-family]
To avoid an argument by cases, for $\lambda \in \Lambda $ with $s\left(
\lambda \right) \Lambda ^{1}=\emptyset $, we write $s_{\beta \left( \lambda
\right) }:=0$, so \ that 
\begin{equation*}
T_{\lambda }=s_{\alpha \left( \lambda \right) }+s_{\beta \left( \lambda
\right) }\text{.}
\end{equation*}

First, we want to show $\left\{ T_{\lambda }:\lambda \in \Lambda \right\} $
is a Toeplitz-Cuntz-Krieger $\Lambda $-family in $C^{\ast }(T\Lambda )$. For
(TCK1), take $v\in \Lambda ^{0}$. Since $\left\{ s_{\alpha \left( v\right)
}\right\} \cup \left\{ s_{\beta \left( v\right) }\right\} $ are mutually
orthogonal projections, then $T_{v}$ is a projection. Meanwhile, for $v,w\in
\Lambda ^{0}$ with $v\neq w$,%
\begin{equation*}
T_{v}T_{w}=s_{\alpha \left( v\right) }s_{\alpha \left( w\right) }+s_{\alpha
\left( v\right) }s_{\beta \left( w\right) }+s_{\beta \left( v\right)
}s_{\alpha \left( w\right) }+s_{\beta \left( v\right) }s_{\beta \left(
w\right) }=0\text{.}
\end{equation*}

Next we show (TCK2). Take $\mu ,\nu \in \Lambda $ where $s\left( \mu \right)
=r\left( \nu \right) $. Then%
\begin{equation*}
T_{\mu }T_{\nu }=s_{\alpha \left( \mu \right) }s_{\alpha \left( \nu \right)
}+s_{\alpha \left( \mu \right) }s_{\beta \left( \nu \right) }+s_{\beta
\left( \mu \right) }s_{\alpha \left( \nu \right) }+s_{\beta \left( \mu
\right) }s_{\beta \left( \nu \right) }\text{.}
\end{equation*}%
If $\nu $ is a vertex, the middle terms vanish and we get%
\begin{equation*}
T_{\mu }T_{\nu }=s_{\alpha \left( \mu \right) }+s_{\beta \left( \mu \right)
}=T_{\mu }\text{,}
\end{equation*}%
as required. Otherwise, the last two terms vanish and we get 
\begin{equation*}
T_{\mu }T_{\nu }=s_{\alpha \left( \mu \right) }s_{\alpha \left( \nu \right)
}+s_{\alpha \left( \mu \right) }s_{\beta \left( \nu \right) }=s_{\alpha
\left( \mu \nu \right) }+s_{\beta \left( \mu \nu \right) }=T_{\mu \nu }\text{%
,}
\end{equation*}%
which is (TCK2).

To show (TCK3), take $\lambda ,\mu \in \Lambda $. Then%
\begin{equation}
T_{\lambda }^{\ast }T_{\mu }=s_{\alpha \left( \lambda \right) }^{\ast
}s_{\alpha \left( \mu \right) }+s_{\alpha \left( \lambda \right) }^{\ast
}s_{\beta \left( \mu \right) }+s_{\beta \left( \lambda \right) }^{\ast
}s_{\alpha \left( \mu \right) }+s_{\beta \left( \lambda \right) }^{\ast
}s_{\beta \left( \mu \right) }\text{.}  \label{eq}
\end{equation}%
We give separate arguments for $\Lambda ^{\min }\left( \lambda ,\mu \right)
=\emptyset $ and $\Lambda ^{\min }\left( \lambda ,\mu \right) \neq \emptyset 
$. For case $\Lambda ^{\min }\left( \lambda ,\mu \right) =\emptyset $, we
have 
\begin{align*}
\emptyset & =T\Lambda ^{\min }\left( \alpha \left( \lambda \right) ,\alpha
\left( \mu \right) \right) =T\Lambda ^{\min }\left( \alpha \left( \lambda
\right) ,\beta \left( \mu \right) \right) \\
& =T\Lambda ^{\min }\left( \beta \left( \lambda \right) ,\alpha \left( \mu
\right) \right) =T\Lambda ^{\min }\left( \beta \left( \lambda \right) ,\beta
\left( \mu \right) \right) \text{.}
\end{align*}%
Hence, $s_{\alpha \left( \lambda \right) }^{\ast }s_{\alpha \left( \mu
\right) }=s_{\alpha \left( \lambda \right) }^{\ast }s_{\beta \left( \mu
\right) }=s_{\beta \left( \lambda \right) }^{\ast }s_{\alpha \left( \mu
\right) }=s_{\beta \left( \lambda \right) }^{\ast }s_{\beta \left( \mu
\right) }=0$ and then Equation \ref{eq} becomes 
\begin{equation*}
T_{\lambda }^{\ast }T_{\mu }=0=\sum_{(\lambda ^{\prime },\mu ^{\prime })\in
\Lambda ^{\min }\left( \lambda ,\mu \right) }T_{\lambda ^{\prime }}T_{\mu
^{\prime }}^{\ast }\text{.}
\end{equation*}

Now suppose $\Lambda ^{\min }\left( \lambda ,\mu \right) \neq \emptyset $.
Take $\left( a,b\right) \in \Lambda ^{\min }\left( \lambda ,\mu \right) $.
We consider several cases: whether $a$ equals $s\left( \lambda \right) $
and/or $b$ equals $s\left( \mu \right) $. First suppose $a=s\left( \lambda
\right) $ and $b=s\left( \mu \right) $. So $\lambda =\lambda s\left( \lambda
\right) =\mu s\left( \mu \right) =\mu $. Because $\alpha \left( \lambda
\right) $ and $\beta \left( \lambda \right) $ are paths with the same degree
and different sources, then $T\Lambda ^{\min }\left( \alpha \left( \lambda
\right) ,\beta \left( \lambda \right) \right) =\emptyset $. Thus,%
\begin{equation*}
s_{\beta \left( \lambda \right) }^{\ast }s_{\alpha \left( \lambda \right)
}=0=s_{\alpha \left( \lambda \right) }^{\ast }s_{\beta \left( \lambda
\right) }
\end{equation*}%
and Equation \ref{eq} becomes 
\begin{align*}
T_{\lambda }^{\ast }T_{\lambda }& =s_{\alpha \left( \lambda \right) }^{\ast
}s_{\alpha \left( \lambda \right) }+s_{\beta \left( \lambda \right) }^{\ast
}s_{\beta \left( \lambda \right) } \\
& =s_{s\left( \alpha \left( \lambda \right) \right) }+s_{s\left( \beta
\left( \lambda \right) \right) }=s_{\alpha \left( s\left( \lambda \right)
\right) }+s_{\beta \left( s\left( \lambda \right) \right) } \\
& =T_{_{s\left( \lambda \right) }}=T_{_{s\left( \lambda \right)
}}T_{_{s\left( \lambda \right) }}^{\ast } \\
& =\sum_{(\lambda ^{\prime },\mu ^{\prime })\in \Lambda ^{\min }\left(
\lambda ,\lambda \right) }T_{\lambda ^{\prime }}T_{\mu ^{\prime }}^{\ast }%
\text{ (since }\Lambda ^{\min }\left( \lambda ,\lambda \right) =s\left(
\lambda \right) \text{)}.
\end{align*}

Next suppose $a=s\left( \lambda \right) $ and $b\neq s\left( \mu \right) $.
Then $\lambda =\mu b$ and 
\begin{equation*}
T\Lambda ^{\min }\left( \alpha \left( \lambda \right) ,\beta \left( \mu
\right) \right) =\emptyset =T\Lambda ^{\min }\left( \beta \left( \lambda
\right) ,\beta \left( \mu \right) \right)
\end{equation*}%
since $s\left( \beta \left( \mu \right) \right) T\Lambda ^{1}=\emptyset $.
Hence%
\begin{equation*}
s_{\alpha \left( \lambda \right) }^{\ast }s_{\beta \left( \mu \right)
}=0=s_{\beta \left( \lambda \right) }^{\ast }s_{\beta \left( \mu \right) }
\end{equation*}%
and Equation \ref{eq} becomes%
\begin{equation*}
T_{\lambda }^{\ast }T_{\mu }=s_{\alpha \left( \lambda \right) }^{\ast
}s_{\alpha \left( \mu \right) }+s_{\beta \left( \lambda \right) }^{\ast
}s_{\alpha \left( \mu \right) }\text{.}
\end{equation*}%
Every $\left( \alpha \left( s\left( \lambda \right) \right) ,\eta \right)
\in T\Lambda ^{\min }\left( \alpha \left( \lambda \right) ,\alpha \left( \mu
\right) \right) $ has $\eta =\alpha \left( \mu ^{\prime }\right) $ with $%
(s\left( \lambda \right) ,\mu ^{\prime })\in \Lambda ^{\min }\left( \lambda
,\mu \right) $. Similarly, every $\left( \beta \left( s\left( \lambda
\right) \right) ,\eta \right) \in T\Lambda ^{\min }\left( \beta \left(
\lambda \right) ,\alpha \left( \mu \right) \right) $ has $\eta =\beta \left(
\mu ^{\prime }\right) $ with $(s\left( \lambda \right) ,\mu ^{\prime })\in
\Lambda ^{\min }\left( \lambda ,\mu \right) $. Thus, by using (TCK3) in $%
C^{\ast }\left( T\Lambda \right) $, 
\begin{align*}
T_{\lambda }^{\ast }T_{\mu }& =s_{\alpha \left( \lambda \right) }^{\ast
}s_{\alpha \left( \mu \right) }+s_{\beta \left( \lambda \right) }^{\ast
}s_{\alpha \left( \mu \right) } \\
& =\sum_{\left( \alpha \left( s\left( \lambda \right) \right) ,\eta \right)
\in T\Lambda ^{\min }\left( \alpha \left( \lambda \right) ,\alpha \left( \mu
\right) \right) }s_{\alpha \left( s\left( \lambda \right) \right) }s_{\eta
}^{\ast }+\sum_{\left( \beta \left( s\left( \lambda \right) \right) ,\eta
\right) \in T\Lambda ^{\min }\left( \beta \left( \lambda \right) ,\alpha
\left( \mu \right) \right) }s_{\beta \left( s\left( \lambda \right) \right)
}s_{\eta }^{\ast } \\
& =\sum_{(s\left( \lambda \right) ,\mu ^{\prime })\in \Lambda ^{\min }\left(
\lambda ,\mu \right) }s_{\alpha \left( s\left( \lambda \right) \right)
}s_{\alpha (\mu ^{\prime })}^{\ast }+\sum_{(s\left( \lambda \right) ,\mu
^{\prime })\in \Lambda ^{\min }\left( \lambda ,\mu \right) }s_{\beta \left(
s\left( \lambda \right) \right) }s_{\beta (\mu ^{\prime })}^{\ast } \\
& =\sum_{(s\left( \lambda \right) ,\mu ^{\prime })\in \Lambda ^{\min }\left(
\lambda ,\mu \right) }(s_{\alpha \left( s\left( \lambda \right) \right)
}s_{\alpha (\mu ^{\prime })}^{\ast }+s_{\beta \left( s\left( \lambda \right)
\right) }s_{\beta (\mu ^{\prime })}^{\ast }) \\
& =\sum_{(s\left( \lambda \right) ,\mu ^{\prime })\in \Lambda ^{\min }\left(
\lambda ,\mu \right) }(s_{\alpha \left( s\left( \lambda \right) \right)
}+s_{\beta \left( s\left( \lambda \right) \right) })(s_{\alpha (\mu ^{\prime
})}^{\ast }+s_{\beta (\mu ^{\prime })}^{\ast }) \\
& =\sum_{(s\left( \lambda \right) ,\mu ^{\prime })\in \Lambda ^{\min }\left(
\lambda ,\mu \right) }T_{s\left( \lambda \right) }T_{\mu ^{\prime }}^{\ast
}=\sum_{(\lambda ^{\prime },\mu ^{\prime })\in \Lambda ^{\min }\left(
\lambda ,\mu \right) }T_{\lambda ^{\prime }}T_{\mu ^{\prime }}^{\ast }\text{.%
}
\end{align*}

By taking adjoints, we deduce (TCK3) when $a\neq s\left( \lambda \right) $
and $b=s\left( \mu \right) $.

Now we consider the last case, which is $a\neq s\left( \lambda \right) $ and 
$b\neq s\left( \mu \right) $. This means we have neither $\lambda =\mu b$
nor $\mu =\lambda a$. Hence, 
\begin{equation*}
T\Lambda ^{\min }\left( \alpha \left( \lambda \right) ,\beta \left( \mu
\right) \right) =T\Lambda ^{\min }\left( \beta \left( \lambda \right)
,\alpha \left( \mu \right) \right) =T\Lambda ^{\min }\left( \beta \left(
\lambda \right) ,\beta \left( \mu \right) \right) =\emptyset
\end{equation*}%
since $s\left( \beta \left( \lambda \right) \right) T\Lambda ^{1}=\emptyset
=s\left( \beta \left( \mu \right) \right) T\Lambda ^{1}=\emptyset $. Hence,%
\begin{equation*}
s_{\alpha \left( \lambda \right) }^{\ast }s_{\beta \left( \mu \right)
}=s_{\beta \left( \lambda \right) }^{\ast }s_{\alpha \left( \mu \right)
}=s_{\beta \left( \lambda \right) }^{\ast }s_{\beta \left( \mu \right) }=0%
\text{.}
\end{equation*}%
On the other hand, we have%
\begin{equation*}
T\Lambda ^{\min }\left( \alpha \left( \lambda \right) ,\alpha \left( \mu
\right) \right) =\left\{ \left( \alpha \left( \lambda ^{\prime }\right)
,\alpha \left( \mu ^{\prime }\right) \right) ,\left( \beta \left( \lambda
^{\prime }\right) ,\beta \left( \mu ^{\prime }\right) \right) :(\lambda
^{\prime },\mu ^{\prime })\in \Lambda ^{\min }\left( \lambda ,\mu \right)
\right\} \text{.}
\end{equation*}%
Therefore, Equation \ref{eq} becomes%
\begin{align*}
T_{\lambda }^{\ast }T_{\mu }& =s_{\alpha \left( \lambda \right) }^{\ast
}s_{\alpha \left( \mu \right) }=\sum_{\left( \omega ,\eta \right) \in
T\Lambda ^{\min }\left( \alpha \left( \lambda \right) ,\alpha \left( \mu
\right) \right) }s_{\omega }s_{\eta }^{\ast } \\
& =\sum_{(\lambda ^{\prime },\mu ^{\prime })\in \Lambda ^{\min }\left(
\lambda ,\mu \right) }(s_{\alpha (\lambda ^{\prime })}s_{\alpha (\mu
^{\prime })}^{\ast }+s_{\beta (\lambda ^{\prime })}s_{\beta (\mu ^{\prime
})}^{\ast }) \\
& =\sum_{(\lambda ^{\prime },\mu ^{\prime })\in \Lambda ^{\min }\left(
\lambda ,\mu \right) }(s_{\alpha (\lambda ^{\prime })}+s_{\beta (\lambda
^{\prime })})(s_{\alpha (\mu ^{\prime })}^{\ast }+s_{\beta (\mu ^{\prime
})}^{\ast }) \\
& =\sum_{(\lambda ^{\prime },\mu ^{\prime })\in \Lambda ^{\min }\left(
\lambda ,\mu \right) }T_{\lambda ^{\prime }}T_{\mu ^{\prime }}^{\ast }\text{.%
}
\end{align*}

So for all cases, we have%
\begin{equation*}
T_{\lambda }^{\ast }T_{\mu }=\sum_{(\lambda ^{\prime },\mu ^{\prime })\in
\Lambda ^{\min }\left( \lambda ,\mu \right) }T_{\lambda ^{\prime }}T_{\mu
^{\prime }}^{\ast }
\end{equation*}%
and $\left\{ T_{\lambda }:\lambda \in \Lambda \right\} $ satisfies (TCK3).
\end{proof}

\begin{proof}[Proof that $\protect\phi _{T}$ is injective]
Now the\ universal property of $TC^{\ast }\left( \Lambda \right) $ gives a
homomorphism $\phi _{T}:TC^{\ast }\left( \Lambda \right) \rightarrow C^{\ast
}(T\Lambda )$ satisfying $\phi _{T}\left( t_{\lambda }\right) =T_{\lambda }$
for every $\lambda \in \Lambda $.

We show the injectivity of $\phi _{T}~$by using Theorem \ref%
{uniqueness-theorem-of-toeplitz}. Take $v\in \Lambda ^{0}$. We show 
\begin{equation*}
\prod_{e\in v\Lambda ^{1}}\left( T_{v}-T_{e}T_{e}^{\ast }\right) \neq 0.
\end{equation*}
First suppose $v\Lambda ^{1}\neq \emptyset $. Take $1\leq i\leq k$ such that 
$v\Lambda ^{e_{i}}\neq \emptyset $. We claim 
\begin{equation*}
\prod\limits_{e\in v\Lambda ^{e_{i}}}\left( T_{v}-T_{e}T_{e}^{\ast }\right)
\geq s_{\beta \left( v\right) }\text{.}
\end{equation*}%
Since $v\Lambda ^{e_{i}}\neq \emptyset $, then $\alpha \left( v\right)
T\Lambda ^{e_{i}}\neq \emptyset $ and by \cite[Lemma 2.7 (iii)]{RSY04},%
\begin{align*}
s_{\alpha \left( v\right) }& \geq \sum_{g\in \alpha \left( v\right) T\Lambda
^{e_{i}}}s_{g}s_{g}^{\ast } \\
& =\sum_{e\in v\Lambda ^{e_{i}}}s_{\alpha \left( e\right) }s_{\alpha \left(
e\right) }^{\ast }+\sum_{\substack{ e\in v\Lambda ^{e_{i}}  \\ s\left(
e\right) \Lambda ^{1}\neq \emptyset }}s_{\beta \left( e\right) }s_{\beta
\left( e\right) }^{\ast } \\
& =\sum_{\substack{ e\in v\Lambda ^{e_{i}}  \\ s\left( e\right) \Lambda
^{1}\neq \emptyset }}\left( s_{\alpha \left( e\right) }s_{\alpha \left(
e\right) }^{\ast }+s_{\beta \left( e\right) }s_{\beta \left( e\right)
}^{\ast }\right) +\sum_{\substack{ e\in v\Lambda ^{e_{i}}  \\ s\left(
e\right) \Lambda ^{1}=\emptyset }}s_{\alpha \left( e\right) }s_{\alpha
\left( e\right) }^{\ast } \\
& =\sum_{\substack{ e\in v\Lambda ^{e_{i}}  \\ s\left( e\right) \Lambda
^{1}\neq \emptyset }}T_{e}T_{e}^{\ast }+\sum_{\substack{ e\in v\Lambda
^{e_{i}}  \\ s\left( e\right) \Lambda ^{1}=\emptyset }}T_{e}T_{e}^{\ast } \\
& =\sum_{e\in v\Lambda ^{e_{i}}}T_{e}T_{e}^{\ast }\text{.}
\end{align*}%
Meanwhile, since every $e\in v\Lambda ^{e_{i}}$ has the same degree, 
\begin{align*}
\prod\limits_{e\in v\Lambda ^{e_{i}}}\left( T_{v}-T_{e}T_{e}^{\ast }\right)
& =T_{v}-\sum_{e\in v\Lambda ^{e_{i}}}T_{e}T_{e}^{\ast } \\
& =\left( s_{\alpha \left( v\right) }+s_{\beta \left( v\right) }\right)
-\sum_{e\in v\Lambda ^{e_{i}}}T_{e}T_{e}^{\ast } \\
& =s_{\beta \left( v\right) }+\Big(s_{\alpha \left( v\right) }-\sum_{e\in
v\Lambda ^{e_{i}}}T_{e}T_{e}^{\ast }\Big) \\
& \geq s_{\beta \left( v\right) }\text{,}
\end{align*}%
as claimed. This claim implies

\begin{equation*}
\prod\limits_{e\in v\Lambda ^{1}}\left( T_{v}-T_{e}T_{e}^{\ast }\right) \geq
\prod_{\left\{ i:v\Lambda ^{e_{i}}\neq \emptyset \right\} }s_{\beta \left(
v\right) }=s_{\beta \left( v\right) }\neq 0
\end{equation*}%
since $v\Lambda ^{1}\neq \emptyset $, as required.

Finally, for $v\in \Lambda ^{0}$ with $v\Lambda ^{1}=\emptyset $, we have%
\begin{equation*}
T_{v}=s_{\alpha \left( v\right) }\neq 0\text{.}
\end{equation*}%
Hence, by Theorem \ref{uniqueness-theorem-of-toeplitz}, $\phi _{T}$ is
injective.
\end{proof}

\begin{proof}[Proof that $\protect\phi _{T}$ is surjective]
Now we show the surjectivity of $\phi _{T}$. Since $C^{\ast }(T\Lambda )$ is
generated by $\left\{ s_{\tau }:\tau \in T\Lambda \right\} $, then it
suffices to show that for every $\tau \in T\Lambda $, $s_{\tau }\in \func{im}%
\left( \phi _{T}\right) $. Recall that for every $\tau \in T\Lambda $, $%
s_{\tau }$ is either $s_{\alpha \left( \lambda \right) }$ or $s_{\beta
\left( \lambda \right) }$ for some $\lambda \in \Lambda $.

Take $v\in \Lambda ^{0}$. First we show $s_{\alpha \left( v\right) }$ and $%
s_{\beta \left( v\right) }$ (if it exists) belong to $\func{im}\left( \phi
_{T}\right) $. If $v\Lambda ^{1}=\emptyset $, then%
\begin{equation*}
s_{\alpha \left( v\right) }=T_{v}\in \func{im}\left( \phi _{T}\right) \text{.%
}
\end{equation*}%
Next suppose $v\Lambda ^{1}\neq \emptyset $. First we show that $s_{\beta
\left( v\right) }=\prod_{e\in v\Lambda ^{1}}\left( T_{v}-T_{e}T_{e}^{\ast
}\right) $. Note that for every $f\in \alpha \left( v\right) T\Lambda ^{1}$,
the projection $s_{\alpha \left( v\right) }-s_{f}s_{f}^{\ast }\leq s_{\alpha
\left( v\right) }$ is othogonal to $s_{\beta \left( v\right) }$. This implies%
\begin{eqnarray*}
\prod_{f\in \alpha \left( v\right) T\Lambda ^{1}}((s_{\alpha \left( v\right)
}+s_{\beta \left( v\right) })-s_{f}s_{f}^{\ast }) &=&s_{\beta \left(
v\right) }+\prod_{f\in \alpha \left( v\right) T\Lambda ^{1}}(s_{\alpha
\left( v\right) }-s_{f}s_{f}^{\ast }) \\
&=&s_{\beta \left( v\right) }\text{,}
\end{eqnarray*}%
since $v\Lambda ^{1}$ is an exhaustive set. Hence,%
\begin{eqnarray*}
s_{\beta \left( v\right) } &=&\prod_{f\in \alpha \left( v\right) T\Lambda
^{1}}((s_{\alpha \left( v\right) }+s_{\beta \left( v\right)
})-s_{f}s_{f}^{\ast }) \\
&=&\prod_{e\in v\Lambda ^{1}}(T_{v}-s_{\alpha \left( e\right) }s_{\alpha
\left( e\right) }^{\ast })\prod_{\substack{ e\in v\Lambda ^{1}  \\ s\left(
e\right) \Lambda ^{1}\neq \emptyset }}(T_{v}-s_{\beta \left( e\right)
}s_{\beta \left( e\right) }^{\ast }) \\
&=&\prod_{\substack{ e\in v\Lambda ^{1}  \\ s\left( e\right) \Lambda
^{1}=\emptyset }}(T_{v}-s_{\alpha \left( e\right) }s_{\alpha \left( e\right)
}^{\ast })\prod_{\substack{ e\in v\Lambda ^{1}  \\ s\left( e\right) \Lambda
^{1}\neq \emptyset }}(T_{v}-s_{\alpha \left( e\right) }s_{\alpha \left(
e\right) }^{\ast })(T_{v}-s_{\beta \left( e\right) }s_{\beta \left( e\right)
}^{\ast }) \\
&=&\prod_{\substack{ e\in v\Lambda ^{1}  \\ s\left( e\right) \Lambda
^{1}=\emptyset }}(T_{v}-s_{\alpha \left( e\right) }s_{\alpha \left( e\right)
}^{\ast })\prod_{\substack{ e\in v\Lambda ^{1}  \\ s\left( e\right) \Lambda
^{1}\neq \emptyset }}(T_{v}-(s_{\alpha \left( e\right) }s_{\alpha \left(
e\right) }^{\ast }+s_{\beta \left( e\right) }s_{\beta \left( e\right)
}^{\ast })) \\
&=&\prod_{\substack{ e\in v\Lambda ^{1}  \\ s\left( e\right) \Lambda
^{1}=\emptyset }}(T_{v}-T_{e}T_{e}^{\ast })\prod_{\substack{ e\in v\Lambda
^{1}  \\ s\left( e\right) \Lambda ^{1}\neq \emptyset }}(T_{v}-T_{e}T_{e}^{%
\ast }) \\
&=&\prod_{e\in v\Lambda ^{1}}\left( T_{v}-T_{e}T_{e}^{\ast }\right) \text{,}
\end{eqnarray*}%
as required, and $s_{\beta \left( v\right) }$ belongs to $\func{im}\left(
\phi _{T}\right) $. Furthermore,%
\begin{equation*}
s_{\alpha \left( v\right) }=T_{v}-s_{\beta \left( v\right)
}=T_{v}-\prod_{e\in v\Lambda ^{1}}(T_{v}-T_{e}T_{e}^{\ast })\in \func{im}%
\left( \phi _{T}\right) \text{,}
\end{equation*}%
as required.

Now take $\lambda \in \Lambda $. We have to show $s_{\alpha \left( \lambda
\right) }$ and $s_{\beta \left( \lambda \right) }$ (if it exists) belong to $%
\func{im}\left( \phi _{T}\right) $. If $s\left( \lambda \right) \Lambda
^{1}=\emptyset $, then 
\begin{equation*}
s_{\alpha \left( \lambda \right) }=s_{\alpha \left( \lambda \right)
}s_{\alpha \left( s\left( \lambda \right) \right) }=T_{\lambda }T_{s\left(
\lambda \right) }=T_{\lambda }\in \func{im}\left( \phi _{T}\right) \text{.}
\end{equation*}%
Next suppose $s\left( \lambda \right) \Lambda ^{1}\neq \emptyset $. Then $%
s_{\beta \left( \lambda \right) }s_{\alpha \left( s\left( \lambda \right)
\right) }=0$ and $s_{\alpha \left( \lambda \right) }s_{\beta \left( s\left(
\lambda \right) \right) }=0$. Hence, 
\begin{align*}
s_{\alpha \left( \lambda \right) }& =s_{\alpha \left( \lambda \right)
}s_{\alpha \left( s\left( \lambda \right) \right) }=\left( s_{\alpha \left(
\lambda \right) }+s_{\beta \left( \lambda \right) }\right) s_{\alpha \left(
s\left( \lambda \right) \right) } \\
& =T_{\lambda }\Big(T_{s\left( \lambda \right) }-\prod_{e\in s\left( \lambda
\right) \Lambda ^{1}}(T_{s\left( \lambda \right) }-T_{e}T_{e}^{\ast })\Big)
\\
& =T_{\lambda }-T_{\lambda }\prod_{e\in s\left( \lambda \right) \Lambda
^{1}}(T_{s\left( \lambda \right) }-T_{e}T_{e}^{\ast })\in \func{im}\left(
\phi _{T}\right)
\end{align*}%
and%
\begin{align*}
s_{\beta \left( \lambda \right) }& =s_{\beta \left( \lambda \right)
}s_{\beta \left( s\left( \lambda \right) \right) }=\left( s_{\alpha \left(
\lambda \right) }+s_{\beta \left( \lambda \right) }\right) s_{\beta \left(
s\left( \lambda \right) \right) } \\
& =T_{\lambda }\prod_{e\in s\left( \lambda \right) \Lambda ^{1}}(T_{s\left(
\lambda \right) }-T_{e}T_{e}^{\ast })\in \func{im}\left( \phi _{T}\right) 
\text{.}
\end{align*}%
Therefore, $\phi _{T}$ is surjective and an isomorphism.
\end{proof}

\begin{corollary}
\label{isomorphism-2}Let $\Lambda $ be a row-finite $k$-graph and $T\Lambda $
be the $k$-graph as in Proposition \ref{tlambda}. Let $\left\{ t_{\lambda
}:\lambda \in \Lambda \right\} $ be the universal Toeplitz-Cuntz-Krieger $%
\Lambda $-family and $\left\{ s_{\omega }:\omega \in T\Lambda \right\} $ be
the universal Cuntz-Krieger $\,T\Lambda $-family. For $\tau \in T\Lambda $,
define%
\begin{equation*}
S_{\tau }:=%
\begin{cases}
t_{\lambda } & \text{if }\tau =\alpha \left( \lambda \right) \text{ with }%
s\left( \lambda \right) \Lambda ^{1}=\emptyset \\ 
t_{\lambda }-t_{\lambda }\prod_{e\in v\Lambda ^{1}}(t_{v}-t_{e}t_{e}^{\ast })
& \text{if }\tau =\alpha \left( \lambda \right) \text{ with }s\left( \lambda
\right) \Lambda ^{1}\neq \emptyset \\ 
t_{\lambda }\prod_{e\in v\Lambda ^{1}}(t_{v}-t_{e}t_{e}^{\ast }) & \text{if }%
\tau =\beta \left( \lambda \right) \text{ with }s\left( \lambda \right)
\Lambda ^{1}\neq \emptyset \text{.}%
\end{cases}%
\end{equation*}%
Suppose that $\phi _{T}:TC^{\ast }\left( \Lambda \right) \rightarrow C^{\ast
}(T\Lambda )$ is the isomorphism as in Theorem \ref{isomorphism}\ and $\pi
_{S}:C^{\ast }\left( T\Lambda \right) \rightarrow TC^{\ast }\left( \Lambda
\right) $ is the homomorphism such that $\pi _{S}\left( s_{\tau }\right)
=S_{\tau }$ for $\tau \in T\Lambda $. Then $\phi _{T}^{-1}=\pi _{S}$.
\end{corollary}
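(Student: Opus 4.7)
The plan is to recognise that Corollary~\ref{isomorphism-2} is essentially a repackaging of the explicit preimage formulas already established at the end of Theorem~\ref{isomorphism}. Since $\phi_T$ has been shown there to be a $*$-isomorphism, its inverse $\phi_T^{-1}:C^*(T\Lambda)\to TC^*(\Lambda)$ is automatically a $*$-homomorphism. I would open by writing out what Theorem~\ref{isomorphism} says about $\phi_T^{-1}(s_{\alpha(\lambda)})$ and $\phi_T^{-1}(s_{\beta(\lambda)})$ on each generator $s_\tau$ of $C^*(T\Lambda)$, and then doing a side-by-side comparison (with the convention $v=s(\lambda)$) to check that these three formulas reproduce exactly the three cases of $S_\tau$ in the corollary.

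Once the identification $\phi_T^{-1}(s_\tau)=S_\tau$ is in place, the rest follows in two quick steps. First, since $\phi_T^{-1}$ is a $*$-homomorphism, the image $\{\phi_T^{-1}(s_\tau):\tau\in T\Lambda\}=\{S_\tau:\tau\in T\Lambda\}$ inherits the Cuntz-Krieger $T\Lambda$-family relations (TCK1--3) and (CK) from $\{s_\tau\}$. Hence $\{S_\tau\}$ is a Cuntz-Krieger $T\Lambda$-family in $TC^*(\Lambda)$, and the universal property of $C^*(T\Lambda)$ produces a homomorphism $\pi_S:C^*(T\Lambda)\to TC^*(\Lambda)$ with $\pi_S(s_\tau)=S_\tau$; in particular the existence of $\pi_S$ asserted in the corollary is justified. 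Second, to conclude $\pi_S=\phi_T^{-1}$, I would note that both $\pi_S$ and $\phi_T^{-1}$ send each generator $s_\tau$ to the same element $S_\tau$ of $TC^*(\Lambda)$, and $\{s_\tau:\tau\in T\Lambda\}$ generates $C^*(T\Lambda)$ as a $C^*$-algebra; agreement on a generating set forces equality of the two homomorphisms.

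There is no substantive obstacle in this argument: all the work is already done inside the proof of Theorem~\ref{isomorphism}. The only point needing modest care is the bookkeeping in the first step, namely matching the three cases in the definition of $S_\tau$ against the three formulas for $\phi_T^{-1}(s_\tau)$ displayed in Theorem~\ref{isomorphism}, while keeping track of the implicit identification $v=s(\lambda)$ and of the split of the $\alpha(\lambda)$ case according to whether $s(\lambda)\Lambda^1$ is empty or not.
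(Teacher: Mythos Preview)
Your proposal is correct and follows essentially the same approach as the paper's proof: read off $\phi_T^{-1}(s_\tau)=S_\tau$ from the explicit formulas at the end of Theorem~\ref{isomorphism}, deduce that $\{S_\tau\}$ is a Cuntz-Krieger $T\Lambda$-family, and conclude $\pi_S=\phi_T^{-1}$ by agreement on generators. The paper's version is terser but the logic is identical.
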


\begin{proof}
Take $\lambda \in \Lambda $. By Theorem \ref{isomorphism}, we get $\phi
_{T}^{-1}\left( s_{\alpha \left( \lambda \right) }\right) =t_{\lambda }$ if $%
s\left( \lambda \right) \Lambda ^{1}=\emptyset $. Meanwhile, if $s\left(
\lambda \right) \Lambda ^{1}\neq \emptyset $, by Theorem \ref{isomorphism},
we have $\phi _{T}^{-1}\left( s_{\alpha \left( \lambda \right) }\right)
=t_{\lambda }-t_{\lambda }\prod_{e\in v\Lambda ^{1}}(t_{v}-t_{e}t_{e}^{\ast
})$ and $\phi _{T}^{-1}\left( s_{\beta \left( \lambda \right) }\right)
=t_{\lambda }\prod_{e\in v\Lambda ^{1}}(t_{v}-t_{e}t_{e}^{\ast })$. Hence, $%
\phi _{T}^{-1}\left( s_{\tau }\right) =S_{\tau }$ for $\tau \in T\Lambda $.
This implies that $\left\{ S_{\tau }:\tau \in T\Lambda \right\} $ is a
Cuntz-Krieger $T\Lambda $-family, and then $\phi _{T}^{-1}=\pi _{S}$.
\end{proof}

\begin{remark}
\label{isomorphism-other-direction}Proposition \ref%
{characterisation-of-tlambda} says that $T\Lambda $ is always aperiodic, and
hence the Cuntz-Krieger uniqueness theorem always applies to $T\Lambda $.
This helps explain why no hypothesis on $\Lambda $ is required in the
uniquness theorem of \cite[Theorem 8.1]{RS05}. Indeed, we could have deduced
that theorem by applying the Cuntz-Krieger uniqueness theorem to $T\Lambda $%
. With our current proof of Theorem \ref{isomorphism}, this argument would
be circular, since we used \cite[Theorem 8.1]{RS05} in the proof of Theorem %
\ref{isomorphism}. However, we could prove Corollary \ref{isomorphism-2}
directly by showing that $\left\{ S_{\tau }:\tau \in T\Lambda \right\} $ is
a Cuntz-Krieger $T\Lambda $-family in $TC^{\ast }\left( \Lambda \right) $,
hence gives a homomorphism $\pi _{S}:C^{\ast }\left( T\Lambda \right)
\rightarrow TC^{\ast }\left( \Lambda \right) $, and using the Cuntz-Krieger
uniqueness theorem to see that $\pi _{S}$ is injective. Then we could deduce 
\cite[Theorem 8.1]{RS05} from Corollary \ref{isomorphism-2}, and this would
be a legitimate proof. We worked out the details of this approach, but it
seemed to require an extensive cases argument, and hence became
substantially more complicated.
\end{remark}

\end{document}